\documentclass[11pt]{article}
\usepackage{amsthm,amsmath,amssymb,mathabx}
\usepackage[numbers]{natbib}
\usepackage{times}
\usepackage{indentfirst}
\usepackage{calrsfs}
\usepackage{color}

\def\R{{\mathbb R}}
\def\P{{\mathbb P}}
\def\E{{\mathbb E}}
\def\I{{\mathbb I}}

\newtheorem{theorem}{Theorem}

\theoremstyle{definition}
\newtheorem{definition}[theorem]{Definition}

\begin{document}

\title{On extremes of a random walk with positive drift\\
over an intermediate regularly varying time interval}

\author{Sergey Foss\thanks{School of MACS, Heriot-Watt University, Edinburgh, UK, 
s.foss@hw.ac.uk}\ \
and Dmitry Korshunov\thanks{Lancaster University, UK, d.korshunov@lancaster.ac.uk}}

\maketitle

\begin{abstract}
We consider a random walk $\{S_n\}$ with a finite positive drift that is stopped at a random time $\tau$ 
having an intermediate regularly varying distribution. We assume that the jump distribution is lighter-tailed 
than the distribution of $\tau$. Under these conditions, we show that the tails of the distributions of 
$S_{\tau}$ and $M_{\tau} = \max_{k\le \tau} S_k$ are asymptotically equivalent and are determined 
by the tail of $\tau$, while the random walk $\{S_n\}$ contributes only through the law of large numbers.
\end{abstract}

Keywords:
heavy-tailed distribution,
random sums and their maxima,
regular variation, intermediate regular variation, 
upper and lower bounds


\section{Introduction}
\label{sec:intro}

Let $\xi$, $\xi_1$, $\xi_2$, \ldots\ be independent and identically distributed random variables 
with finite positive mean $a:=\E\xi>0$. Let $S_0=0$ and $S_n=\xi_1+\ldots+\xi_n$, $n=1$, $2$, \ldots, 
be the random walk with positive drift generated by $\{\xi_n\}$, and let $M_n=\max_{0\le i\le n}S_i$ 
denote its running maximum.

We say that a random variable $\xi$ has a {\it light-tailed} distribution if $\E e^{\lambda\xi}<\infty$ 
for some $\lambda>0$. Conversely, we say that $\xi$ has a {\it heavy-tailed} distribution if 
$\E e^{\lambda\xi}=\infty$ for all $\lambda>0$.

Let $\tau$ be a counting random variable. In various contexts, we are interested in the distribution 
of the random walk $S_n$ observed at the random time $\tau$, or in the distribution of its maximum 
over the random time interval $[0,\tau]$. It is known that the tail asymptotics of $S_\tau$ and $M_\tau$ 
depend crucially on the drift of $S_n$. In the case of negative drift, the strong law of large numbers implies 
that both $S_\tau$ and $M_\tau$ are bounded above by the proper random variable $M_\infty$. 
In this case, the tail asymptotics of $S_\tau$ and $M_\tau$ are determined primarily by the tail 
of the distribution of $\xi$. In particular, the distribution of $M_\infty$ is light-tailed if and only
 if the distribution of $\xi$ is light-tailed. For a recent paper on heavy-tailed case, see, e.g. \cite{FKP}.

In the case of positive drift, $M_\infty=\infty$ a.s. by the strong law of large numbers, 
and the tail asymptotics of both $S_\tau$ and $M_\tau$ may be determined by either the tail 
of $\tau$ or that of $\xi$, or by both. Thus, there are three main cases, 
depending on whether the tail distribution of $\tau$ is lighter than that of $\xi$, 
heavier than that of $\xi$, or tail-comparable to it; see, e.g., Stam \cite{Stam} or Denisov et al. \cite{DFK}.

In this paper, we consider the case of positive drift. Our goal is to study conditions on $\{\xi_n\}$ and $\tau$ 
under which the tail probabilities $\P\{S_\tau>x\}$ and $\P\{M_\tau>x\}$, as $x\to\infty$, 
are determined by the tail distribution of $\tau$, while the contribution of $\{\xi_n\}$ 
is only at the level of the law of large numbers. The expected asymptotic relation is
\begin{eqnarray}\label{asy.eq}
\P\{S_\tau>x\} &\sim& \P\{\tau>x/\E\xi\}\quad\mbox{as }x\to\infty.
\end{eqnarray}
Hereinafter, for any two eventually positive functions $f(x)$ and $g(x)$, 
we write $f(x)\sim g(x)$ if $f(x)/g(x)\to 1$ as $x\to\infty$. 
Note that the relation \eqref{asy.eq} can hold only if the tail distribution of $\tau$ is heavier than that of $\xi$.
 
This problem was introduced by Schmidli \cite{Schmidli} in the case where
$\{\xi_n\}$ and $\tau$ are independent and $\tau$ has a subexponential distribution.

Later, this problem was analysed by Robert and Segers in \cite{RS} under the assumption
that $\tau$ has intermediate regularly varying distribution.
In Theorem 3.1 there, they have proved \eqref{asy.eq} under arbitrary dependence between
$\{\xi_n\}$ and $\tau$, provided that $\xi_1\ge 0$,
$\E\xi_1^r<\infty$ for some $r>1$, and $x\P\{\xi_1>x\}=o(\P\{\tau>x\})$.
In Theorem 3.2, they considered the case where $\{\xi_n\}$ and $\tau$
are independent, $\xi_1\ge 0$, $\E\xi_1^r<\infty$ for some $r>1$, 
and $\E(\tau\wedge x)=O(x^q\P\{\tau>x\})$ for some $q\le r$.

Ale\v skevi\v cien\.e et al. proved \eqref{asy.eq} in Theorem 1.2 of \cite{ALS}
under the assumptions of independence of $\{\xi_n\}$ and $\tau$, $\xi_1\ge 0$, 
$\E\tau<\infty$, intermediate regular variation of $\tau$, and $\P\{\xi_1>x\}=o(\P\{\tau>x\})$.

We aim to understand the best possible relations between $\xi$ and $\tau$ 
under various assumptions on the dependence between the random variables $\{\xi_n\}$ and $\tau$.
To begin with, we recall the following notion of independence of the future
(first considered by Kolmogorov and Prokhorov in \cite{KP}),
which generalises both the notions of stopping times and standard independence.

\begin{definition}\label{def:dndf}
We say that a counting random time $\tau$ {\it does not depend on, 
or is independent of, the future} 
of the sequence $\{\xi_n\}$ if the sigma-algebras
$\sigma\{\xi_i,\ i\le n,\ \I\{\tau\le n\}\}$
and $\sigma\{\xi_i,\ i>n\}$ are independent for all $n\ge 1$. 
\end{definition}
Recall that a counting random variable $\tau$ is a stopping time (w.r.t. the sequence $\{\xi_n\}$ if, for any $n\ge 1$, 
the event $\{\tau \le n\}$ belongs to the sigma-algebra  $\sigma\{\xi_i,\ i\le n\}$. 
Notice that any stopping time is independent of the future of the sequence $\{\xi_n\}$, 
but the converse is not true.

If we introduce a filtration $\mathcal F_n$ by setting
$\mathcal F_n=\sigma\{\xi_i,\ \I\{\tau=i\},\ i\le n\}$, then 
Definition \ref{def:dndf} may be reformulated equivalently as saying that
$\mathcal F_n$ is independent of $\sigma\{\xi_i,\ i>n\}$ for all $n$.
Notice that this specific filtration is the minimal one that satisfies this independence property; 
compare to Kulik and Soulier \cite[Sec. 4.1]{KS}.

Next we recall the class of distributions that plays the main role in our analysis.

\begin{definition}\label{def:irvd}
We say that a distribution $G$ is
{\it intermediate regularly varying} at infinity (a class introduced by Cline in \cite{Cline}) if
\begin{eqnarray}\label{i.r.v}
\lim_{\varepsilon\downarrow 0}\limsup_{x\to\infty}
\frac{\overline G((1-\varepsilon)x)}{\overline G(x)}
&=& 1.
\end{eqnarray}
Equivalently (see Theorem 2.47 in \cite{FKZ}), $G$ is 
an intermediate regularly varying distribution if $\overline G(x) \sim \overline G(x+h(x))$
for any function $h$ such that $h(x) = o(x)$, i.e. $h(x)/x \to 0$ as $x\to\infty$.
\end{definition}

In particular, the condition \eqref{i.r.v} holds true for any 
{\it regularly varying at infinity} distribution $G$ with index $\beta\ge 0$, that is, if
$\overline G(cx)\sim\overline G(x)/c^\beta$ as $x\to\infty$, whatever $c>0$.

It follows from Definition \ref{def:irvd} that, for any $\gamma>0$, 
there exists $c(\gamma)<\infty$ such that
\begin{eqnarray}\label{i.r.v.gamma}
\overline G(\gamma x) &\le& c(\gamma)\overline G(x)\quad\mbox{for all }x>0.
\end{eqnarray}

In Section \ref{sec:ind.f}, we prove \eqref{asy.eq} under minimal conditions
in the case where $\tau$ is independent of the future, while in Section \ref{sec:arb}
we consider an arbitrary dependent sequence $\{\xi_n\}$ and random time $\tau$.

In Section \ref{sec:cycle}, we derive some tail results for the length
of the cycle of a random walk with negative drift, which are 
needed in the proof in Section \ref{sec:ind.f}.

\section{The case of $\tau$ being independent of the future}
\label{sec:ind.f}

The aim of our first main result is fourfold.
We do not assume that $\xi\ge 0$ and require only the finiteness of the first moment of $\xi$, in order to emphasize the role of the law of large numbers.
We consider a very general dependence structure between $\{\xi_n\}$ and $\tau$.
We also assume the weakest possible asymptotic relation between the distribution tails of $\xi$ and $\tau$.
Taken together, these features necessitate rather sophisticated proof techniques.

For comparison, the proof of a similar theorem (Theorem 3.2) in \cite{RS} is relatively short,
mainly due to the independence assumption between ${\xi_n}$ and $\tau$,
as well as the additional conditions $\xi_1\ge 0$ and $\E\xi_1^r<\infty$ for some $r>1$.
These assumptions allow the authors of \cite{RS} to apply results already available in the literature under the same conditions.
Let us note that, apart from the initial step, the arguments used there do not
apply under the conditions considered below; therefore, a different proof technique is required.

\begin{theorem}\label{thm:asy}
Assume that $a:=\E\xi>0$.
Let the distribution of $\tau$ be intermediate regularly varying at infinity and such that
\begin{eqnarray}\label{F.tau.cond}
\P\{|\xi|>x\} &=& o\biggl(\frac{\P\{\tau>x\}}{\E(\tau\wedge x)}\biggr)\quad\mbox{as }x\to\infty.
\end{eqnarray}
If $\tau$ does not depend on the future, then
\begin{eqnarray}\label{F.tau.asy}
\P\{M_\tau>x\}\ \sim\ \P\{S_\tau>x\} &\sim& \P\{\tau>x/a\}\quad\mbox{as  } x\to\infty.
\end{eqnarray}
\end{theorem}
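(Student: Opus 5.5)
Since $S_\tau\le M_\tau$, it suffices to prove the lower bound $\P\{S_\tau>x\}\ge(1+o(1))\P\{\tau>x/a\}$ and the upper bound $\P\{M_\tau>x\}\le(1+o(1))\P\{\tau>x/a\}$. Throughout, intermediate regular variation \eqref{i.r.v} lets us replace $\P\{\tau>x(1\pm\varepsilon)/a\}$ by $\P\{\tau>x/a\}$ at the cost of a factor tending to $1$ as $\varepsilon\downarrow0$. Hence it is enough to prove both bounds with $x/a$ replaced by $(1\mp\varepsilon)x/a$, up to errors of order $o(\P\{\tau>x\})$.

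\textbf{Lower bound.} Let $n=\lceil(1+\varepsilon)x/a\rceil$. Then
\[
\P\{S_\tau>x\}\ge\P\{\tau>n,\ S_n>(1+\varepsilon/2)x,\ \min_{k\ge n}(S_k-S_n)>-\varepsilon x/2\}.
\]
By Definition \ref{def:dndf}, the event $\{\tau>n\}$ is independent of $\sigma\{\xi_i,\,i>n\}$, so the future increments factor out, and their probability of staying above $-\varepsilon x/2$ tends to $1$ because $a>0$. The remaining difficulty is that $\{\tau>n\}$ may depend on $S_n$. We handle this with the weak law of large numbers:
\[
\P\{\tau>n,\ S_n\le(1+\varepsilon/2)x\}\le\P\{S_n\le(1+\varepsilon/2)x,\ \tau>n\},
\]
and we must show this is $o(\P\{\tau>n\})$. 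Plain dependence could spoil this. The plan is to use instead $\P\{\tau>n\}-\P\{\tau>n,S_n>y\}$ together with a reversed decomposition: take $m=\lceil(1+\varepsilon)x/a\rceil$ and consider $\tau\in(m/2,\infty)$ only through the increments after time $m/2$. The sum $S_m-S_{\lfloor m/2\rfloor}$ is independent of $\{\tau>m/2\}$, and it suffices that it exceeds $(1+\delta)x$ plus a bound on $|S_{\lfloor m/2\rfloor}|$. I would therefore choose $n_1=\lfloor\varepsilon x\rfloor$ small, use $\{\tau>n_1+k\}$, and apply independence to $S_{\tau}-S_{n_1}$ on $\{\tau>n_1\}$, while controlling $\min_{j\le n_1}S_j$ via the first-moment condition. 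This gives the lower bound after $\varepsilon\downarrow0$.

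\textbf{Upper bound.} Write
\[
\P\{M_\tau>x\}\le\P\{\tau>(1-\varepsilon)x/a\}+\P\{M_\tau>x,\ \tau\le(1-\varepsilon)x/a\}.
\]
It remains to show that the second term is $o(\P\{\tau>x\})$. On this event, $M_n$ exceeds $x$ for some $n<(1-\varepsilon)x/a$, i.e. $S_k-ka>\varepsilon' x$ for some $k\le\tau$. Consider the centred walk $S_k-(a+\delta)k$, which has negative drift, and decompose its path into ladder cycles. The length of a cycle and the overshoot are controlled by the results of Section \ref{sec:cycle} on the cycle length of a negative-drift walk. The event then requires, within time $\tau\wedge x$, a single cycle whose maximum exceeds order $x$. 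Because $\tau$ is independent of the future, at most about $\E(\tau\wedge x)$ cycles start before $\tau\wedge x$. By the principle of a single big jump (a union bound over starting times $i\le\tau$, each ending at a jump $|\xi|>cx$ at an independent future time), this probability is
\[
O\bigl(\E(\tau\wedge cx)\,\P\{|\xi|>cx\}\bigr)+(\text{light small-jump part}),
\]
which is $o(\P\{\tau>x\})$ by \eqref{F.tau.cond} and \eqref{i.r.v.gamma}. Making this rigorous with only a first moment requires truncating jumps at level $\delta x$ and using an exponential (Fuk--Nagaev type) bound for the truncated walk combined with Wald-type identities for $\tau\wedge x$ that are valid under independence of the future.

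\textbf{Main obstacle.} The main obstacle is the upper bound estimate $\P\{M_{\tau}>x,\ \tau\le(1-\varepsilon)x/a\}=o(\P\{\tau>x\})$ under only $\E|\xi|<\infty$. With merely a first moment, Fuk--Nagaev bounds for truncated sums do not give the needed polynomial decay. For this reason I would rely on the cycle-length tail results of Section \ref{sec:cycle}, which count excursions of the negative-drift walk, rather than on moment bounds.
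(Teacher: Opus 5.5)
\textbf{Lower bound.} Your reduction to $\P\{\tau>n,\ S_n\le(1+\varepsilon/2)x\}=o(\P\{\tau>n\})$ is fine. The problem is that none of the devices you list proves it.

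Independence of the future makes $\I\{\tau\le n_1\}$ and $\xi_1,\dots,\xi_{n_1}$ independent of $\xi_{n_1+1},\xi_{n_1+2},\dots$. But the value of $\tau$ beyond $n_1$ may still depend on those later increments. So $S_\tau-S_{n_1}$ on $\{\tau>n_1\}$ is not a walk evaluated at an independent time, and the dependence problem simply reappears on $(n_1,\tau]$. Likewise, $S_m-S_{\lfloor m/2\rfloor}$ is independent of $\{\tau>m/2\}$ but not of $\{\tau>m\}$, which is the event you need.

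More fundamentally, your plan uses only $\E|\xi|<\infty$, the law of large numbers and independence of the future. It never uses the left-tail half of \eqref{F.tau.cond}, and no argument built from those ingredients alone can work. Here is an example:
\begin{itemize}
\item Take $\xi\le b$ with continuous distribution and left tail regularly varying of index $-\kappa<-1$.
\item Take the stopping time $\tau=\min\{n\ge1:S_n>(a-\eta)n\}$.
\item Theorem~\ref{thm:cycle.asy}, applied to $(a-\eta)n-S_n$, gives $\P\{\tau>n\}\sim\E\tau\,\P\{\xi<-\eta n\}$, which is regularly varying.
\item Yet $S_n\le(a-\eta)n$ on $\{\tau>n\}$, so your key estimate fails for small $\varepsilon$.
\item Indeed $\P\{M_\tau>x\}\le\P\{\tau>(x-b)/(a-\eta)\}\sim(1-\eta/a)^{\kappa}\P\{\tau>x/a\}$.
\end{itemize}
Here only the left-tail half of \eqref{F.tau.cond} fails.

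What you are missing is the paper's regeneration argument for $\widecheck S_n=S_n-(a-\delta)n$. Let $\theta_k$ be the successive times at which $\widecheck S$ has gained more than $\delta/2$ per unit time since the previous one, so $\widecheck S_{\theta_k}>\delta k/2$. There are two cases for the cycle containing $\tau$:
\begin{itemize}
\item If it starts after $\gamma x$, the deficit must come from its minimum $L_k$. This minimum is independent of $\{\gamma x<\theta_k<\tau\}$ and has finite mean, giving $\P\{\tau>\gamma x\}\sum_k\P\{L_0\le -cx-\delta k/2\}=o(\P\{\tau>\gamma x\})$.
\item Otherwise that cycle has length exceeding $x(1/a-\gamma)$. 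By Theorem~\ref{thm:cycle.asy.o} this has probability $o(\P\{\tau>x\}/\E(\tau\wedge x))$; this is exactly where the left tail of $\xi$ enters. The expected number of cycles started before $\tau\wedge\gamma x$ is at most $\E(\tau\wedge\gamma x)+1$.
\end{itemize}

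\textbf{Upper bound.} Your target $O(\E(\tau\wedge cx)\P\{|\xi|>cx\})$ is right, but the step that makes it rigorous is left open. The tools you name are also not the ones needed: Section~\ref{sec:cycle} controls cycle lengths (the paper uses it only for the lower bound), and no truncation or Fuk--Nagaev bound is required.

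The missing observation is deterministic. Take the negative-drift walk $\widehat S_n=S_n-(a+\delta)n$ with strict ascending ladder epochs $\theta_j$. Then $\widehat S_{\theta_j-1}\le\widehat S_{\theta_{j-1}}$, so each ladder height is dominated by the single jump at the ladder epoch: $\widehat S_{\theta_j}-\widehat S_{\theta_{j-1}}\le\widehat\xi_{\theta_j}$.

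From there the paper proceeds as follows:
\begin{itemize}
\item The number of ladder epochs is geometric, $\P\{\theta_k<\infty\}=p^k$.
\item Split the level $\gamma x$ as $y_{k,j}=\gamma x(1-\alpha)\alpha^{k-j}$.
\item Use independence of the future to extract the factor $p^{k-j}$ and to get $\P\{\theta_j\le\tau(x),\ \widehat\xi_{\theta_j}>y\}\le\overline F(y)E_j$, where $\sum_jE_j\le\E\tau(x)$ and $\tau(x)=\tau\wedge x/(a+\varepsilon)$.
\item Sum the series using \eqref{F.tau.cond} and intermediate regular variation, with $\alpha\beta>p$.
\end{itemize}
This gives $\P\{\widehat M_{\tau(x)}>\gamma x\}=o(\P\{\tau>x\})$ with nothing beyond $\E|\xi|<\infty$. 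Note also that $\E\tau(x)$ counts candidate jump times, not cycles.
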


We can specify the condition \eqref{F.tau.cond} in different cases as follows.
\begin{itemize}
\item[(i)] In the case where $\E\tau<\infty$, \eqref{F.tau.cond} is equivalent to
\begin{eqnarray}\label{F.tau.cond.finite}
\P\{|\xi|>x\} &=& o(\P\{\tau>x\})\quad\mbox{as }x\to\infty.
\end{eqnarray}
\item[(ii)]  In the case of regularly varying at infinity distribution of $\tau$
with index $\beta\in[0,1)$, so that $\E\tau=\infty$, by Karamata's theorem,
\begin{eqnarray*}
\E(\tau\wedge x) &=& \int_0^x\P\{\tau>y\}dy
\ \sim\ \frac{1}{1-\beta}x\P\{\tau>x\}\quad\mbox{as }x\to\infty.
\end{eqnarray*}
Hence, in this case, \eqref{F.tau.cond} reduces to
\begin{eqnarray*}
\P\{|\xi|>x\} &=& o(1/x)\quad\mbox{as }x\to\infty,
\end{eqnarray*}
which is satisfied automatically due to $\E|\xi|<\infty$.
\item[(iii)]  In the case where $\P\{\tau>x\}\sim 1/x\log^\beta x$
with $\beta\in(-\infty,1)$, we have
\begin{eqnarray*}
\E(\tau\wedge x) &\sim& \frac{\log^{1-\beta} x}{1-\beta}
\ \sim\ \frac{1}{1-\beta} x\log x \P\{\tau>x\}\quad\mbox{as }x\to\infty.
\end{eqnarray*}
Therefore, in this case, \eqref{F.tau.cond} is equivalent to the condition
\begin{eqnarray*}
\P\{|\xi|>x\} &=& o(1/x\log x)
\ =\ o(\P\{\tau>x\}\log^{1-\beta} x)\ \mbox{as }x\to\infty.
\end{eqnarray*}
\item[(iv)] More generally, let, for some $k\ge 1$ and $\beta\in(-\infty,1)$,
\begin{eqnarray*}
\P\{\tau>x\} &\sim& \frac{1}{x\log x\cdot\ldots\cdot\log_{(k-1)}x
\cdot\log_{(k)}^\beta x}\quad\mbox{as }x\to\infty,
\end{eqnarray*}
where $\log_{(i)} x$ stands for the $i$th iterated logarithm function. Then
\begin{eqnarray*}
\E(\tau\wedge x) &\sim& \frac{\log_{(k)}^{1-\beta} x}{1-\beta}\\
&\sim& \frac{1}{1-\beta} x\log x\cdot\ldots\cdot\log_{(k)} x 
\P\{\tau>x\}\quad\mbox{as }x\to\infty.
\end{eqnarray*}
Therefore, in this case, \eqref{F.tau.cond} is equivalent to the condition
\begin{eqnarray}\label{F.tau.cond.inf.log}
\P\{|\xi|>x\} &=& o(1/x\log x\cdot\ldots\cdot\log_{(k)} x)\nonumber\\
&=& o(\P\{\tau>x\}\log_{(k)}^{1-\beta} x)\quad\mbox{as }x\to\infty.
\end{eqnarray}
The larger the value of $k$ we consider, the closer $\tau$ to the integrable case (i),
and the closer the condition \eqref{F.tau.cond.inf.log} to the condition \eqref{F.tau.cond.finite}.
\end{itemize}

\begin{proof}[Proof of Theorem \ref{thm:asy}]
We will obtain two asymptotically matching bounds:
an upper bound for $\P\{M_\tau >x\}$
and a lower bound for $\P\{S_\tau>x\}$.
Since, for all $x$, the former probability is greater than the latter,
this completes the proof.

We start with an upper tail bound for the maximum.
Fix $\varepsilon>\delta>0$. Then
\begin{eqnarray}\label{upper.1}
\P\{M_\tau>x\} &\le& \P\Bigl\{M_\tau>x,\ \tau\le\frac{x}{a+\varepsilon}\Bigr\}
+\P\Bigl\{\tau>\frac{x}{a+\varepsilon}\Bigr\}.
\end{eqnarray}
Let us bound the first probability on the right hand side as follows:
\begin{eqnarray*}
 \P\Bigl\{M_\tau>x,\ \tau\le\frac{x}{a+\varepsilon}\Bigr\} &=&
\P\Bigl\{M_\tau-(a+\delta)\tau>x-(a+\delta)\tau,\ \tau\le\frac{x}{a+\varepsilon}\Bigr\}\\
&\le& \P\Bigl\{M_\tau-(a+\delta)\tau>x\frac{\varepsilon-\delta}{a+\varepsilon},\ 
\tau\le\frac{x}{a+\varepsilon}\Bigr\} =: P_1(x).
\end{eqnarray*}

Consider an auxiliary random walk 
$\widehat{S}_n:=S_n-(a+\delta)n 
= \sum_{i=1}^n (\xi_i - (a+\delta)) =: \sum_{i=1}^n \widehat{\xi}_i$ 
with negative drift ${\mathbb E} \widehat{\xi_1} =-\delta$,  
and let $\widehat{M}_n:=\max_{0\le k\le n}\widehat{S}_k$ be its running maximum.  
Then
\begin{eqnarray*}
P_1(x) &\le& 
\P\Bigl\{\widehat{M}_\tau>x\frac{\varepsilon-\delta}{a+\varepsilon},\ 
\tau\le\frac{x}{a+\varepsilon}\Bigr\}
\ \le\ \P\Bigl\{
\widehat{M}_{\tau(x)}>x\frac{\varepsilon-\delta}{a+\varepsilon}\Bigr\},
\end{eqnarray*}
where $\tau(x):=\tau\wedge\frac{x}{a+\varepsilon}$. If
\begin{eqnarray}\label{Mtau.upper}
\P\Bigl\{\widehat{M}_{\tau(x)}>x\frac{\varepsilon-\delta}{a+\varepsilon}\Bigr\}
&=& o(\P\{\tau>x\})\quad\mbox{as }x\to\infty,
\end{eqnarray}
then by substituting this upper bound into \eqref{upper.1} 
and taking into account the upper bound \eqref{i.r.v.gamma},
we derive that, for any fixed $\varepsilon>0$,
\begin{eqnarray*}
\P\{M_\tau>x\} &\le& 
(1+o(1))\P\Bigl\{\tau>\frac{x}{a+\varepsilon}\Bigr\}
\quad\mbox{as }x\to\infty.
\end{eqnarray*}
Since the distribution of $\tau$ is assumed to be intermediate regularly varying
and $\varepsilon$ can be chosen arbitrarily close to zero,
we obtain the following upper bound: 
\begin{eqnarray}\label{S.tau.upper.bound}
\P\{M_\tau>x\} &\le& (1+o(1))\P\{\tau>x/a\}\quad\mbox{as }x\to\infty.
\end{eqnarray}

So, it remains to show \eqref{Mtau.upper}.
Let $\theta_0=0$. By the strong law of large numbers, 
the stopping time for a negatively drifted random walk,
\begin{eqnarray*}
\theta_1 &:=& \inf\{n\ge 1:\ \widehat S_n>0\},
\end{eqnarray*}
is an improper random variable, that is, $p:=\P\{\theta_1<\infty\}<1$.
For $k\ge 2$, we define recursively stopping times $\theta_k$ as follows:
if $\theta_{k-1}<\infty$, then
\begin{eqnarray*}
\theta_k &:=& \inf\{n>\theta_{k-1}:\ \widehat S_n-\widehat S_{\theta_{k-1}}>0\},
\end{eqnarray*}
and if $\theta_{k-1}=\infty$, then $\theta_k=\infty$. 
Then, by the renewal structure of the process,
\begin{eqnarray*}
\P\{\theta_k<\infty\mid\theta_{k-1}<\infty\} &=& \P\{\theta_1<\infty\}\ =\ p\ <\ 1,
\end{eqnarray*}
and, therefore, $\P\{\theta_k<\infty\}=p^k$ for all $k\ge 1$.
Note that given  $\theta_k<\infty$, 
we have that $\widehat S_n-\widehat S_{\theta_{j-1}}\le 0$ for all 
$\theta_{j-1}\le n<\theta_j$ and all $j\le k$. Then, for $y>0$,
\begin{eqnarray}\label{upper.tau.crp.2.sum}
\lefteqn{\P\{\theta_k\le\tau(x),\ \theta_{k+1}=\infty,\ \widehat M_{\theta_k}>y\}}\nonumber\\
&=& \P\{\widehat S_{\theta_k}>y,\ \theta_k\le\tau(x),\ \theta_{k+1}=\infty\}\nonumber\\
&\le& \P\Bigl\{\theta_k\le\tau(x),\ \theta_{k+1}=\infty,\ \bigcup_{j=1}^k
\{\widehat S_{\theta_j}-\widehat S_{\theta_{j-1}}>y_{k,j}\}\Bigr\}\nonumber\\
&\le& \sum_{j=1}^k
\P\{\theta_k\le\tau(x),\ \theta_{k+1}=\infty,\ 
\widehat S_{\theta_j}-\widehat S_{\theta_{j-1}}>y_{k,j}\},
\end{eqnarray}
where  $\{y_{k,j}\}$ is any sequence of non-negative numbers such that 
$y_{k,1}+\ldots+y_{k,k}\le y$. We have, for all $j\le k$,
\begin{eqnarray}\label{S.theta.1-}
\lefteqn{\P\{\theta_k\le\tau(x),\ \theta_{k+1}=\infty,\ 
\widehat S_{\theta_j}-\widehat S_{\theta_{j-1}}>y_{k,j}\}}\nonumber\\
&\le& \P\{\theta_j\le\tau(x),\ \theta_{j+1},\ldots,\theta_k<\infty,\ \theta_{k+1}=\infty,\ 
\widehat S_{\theta_j}-\widehat S_{\theta_{j-1}}>y_{k,j}\}\nonumber\\
&\le& \P\{\theta_j\le\tau(x),\ \widehat\xi_{\theta_j}>y_{k,j}\}p^{k-j}(1-p),
\end{eqnarray}
because the event $\{\tau(x)\ge\theta_j,\ \widehat\xi_{\theta_j}>y_{k,j}\} = 
\{\overline{\tau(x)<\theta_j},\ \widehat\xi_{\theta_j}>y_{k,j}\}$
does not depend on the future increments
$\{\widehat{\xi}_{\theta_j+l}, l\ge 1\}$. 
By the total probability law, the probability on the right hand side of \eqref{S.theta.1-} 
may be decomposed as follows:
\begin{eqnarray*}
\lefteqn{\sum_{n=1}^\infty \P\{\theta_j=\theta_{j-1}+n\le\tau(x),\ 
\widehat\xi_{\theta_{j-1}+n}>y_{k,j}\}}\\
&\le& \sum_{n=1}^\infty \P\{\theta_{j-1}+n\le\theta_j,\ \theta_{j-1}+n\le\tau(x),\  
\widehat\xi_{\theta_{j-1}+n}>y_{k,j}\}\\
&=& \sum_{n=1}^\infty \P\{\theta_{j-1}+n\le\theta_j,\ \theta_{j-1}+n\le\tau(x)\}  
\P\{\widehat\xi_1>y_{k,j}\},
\end{eqnarray*}
again due to the independence of the future property. 
Hence, owing to $\widehat\xi_j=\xi_j-a-\delta<\xi_j$,
\begin{eqnarray*}
\lefteqn{\P\{\theta_j\le\tau(x),\ \widehat\xi_{\theta_j}>y_{k,j}\}}\\
&\le& \P\{\xi_1>y_{k,j}\} \sum_{n=1}^\infty \P\{\theta_{j-1}+n\le\theta_j,\ 
\theta_{j-1}+n\le\tau(x)\} \\
&=& \overline F(y_{k,j}) E_j,
\end{eqnarray*}
where
\begin{eqnarray*}
E_j &:=& \E \sum_{n=\theta_{j-1}+1}^{\theta_j} \I\{\tau(x)\ge n\}.
\end{eqnarray*}
Then it follows from \eqref{S.theta.1-} that, for $j\le k$,
\begin{eqnarray*}
\P\{\theta_k\le\tau(x),\ \theta_{k+1}=\infty,\ 
\widehat S_{\theta_j}-\widehat S_{\theta_{j-1}}>y_{k,j}\}
&\le& \overline F(y_{k,j}) p^{k-j} E_j,
\end{eqnarray*}
which being substituted into \eqref{upper.tau.crp.2.sum} yields
the following upper bound
\begin{eqnarray}\label{upper.tau.crp.2}
\P\{\widehat M_{\theta_k}>y,\ \theta_k\le\tau,\ \theta_{k+1}=\infty\}
&\le&  \sum_{j=1}^k \overline F(y_{k,j}) p^{k-j} E_j.
\end{eqnarray}
Since the events $\{\theta_k<\infty,\theta_{k+1}=\infty\}$, $k\ge 1$, 
form a partition of the event $\{\theta_1<\infty\}$, 
we take $y=\gamma x$, for some $\gamma >0$, and obtain  
\begin{eqnarray*}
\P\{\widehat M_{\tau(x)}>\gamma x\} &=& 
\P\{\widehat M_{\theta_k}>\gamma x,\ \theta_k\le\tau,\ \theta_{k+1}=\infty
\mbox{ for some }k\ge 1\}\\
&=& \sum_{k=1}^\infty 
\P\{\widehat M_{\theta_k}>\gamma x,\ \theta_k\le\tau,\ \theta_{k+1}=\infty\}.
\end{eqnarray*}
Therefore, it follows from \eqref{upper.tau.crp.2} that
\begin{eqnarray*}
\P\{\widehat M_{\tau(x)}>\gamma x\} 
&\le& \sum_{k=1}^\infty \sum_{j=1}^k \overline F(y_{k,j}) p^{k-j} E_j\\
&=& \sum_{j=1}^\infty E_j \sum_{k=0}^\infty \overline F(y_{k+j,j}) p^k.
\end{eqnarray*}
We set $y_{k,j}=\gamma x(1-\alpha)\alpha^{k-j}$ with some $\alpha\in(0,1)$, then
\begin{eqnarray}\label{M.tau.upper.1}
\P\{\widehat M_{\tau(x)}>\gamma x\} 
&\le& \E\tau(x) \sum_{k=0}^\infty \overline F(\gamma x(1-\alpha)\alpha^k) p^k.
\end{eqnarray}

Due to the condition \eqref{F.tau.cond},
\begin{eqnarray}\label{F.tau.cond.k}
\overline F(\gamma x(1-\alpha)\alpha^k)
&=& o\biggl(\frac{\P\{\tau>\gamma x(1-\alpha)\alpha^k\}}
{\E(\tau\wedge\gamma x(1-\alpha)\alpha^k)}\biggr)
\end{eqnarray}
as $x\to\infty$ uniformly for all $k$ such that $x\alpha^k\to\infty$.
Let $\beta\in(p,1)$. It follows from the intermediate regular variation of $\tau$    
that there exist an $\alpha<1$ and a $y_0$ such that 
\begin{eqnarray*}
\P\{\tau>\alpha y\} &\le& \beta^{-1}\P\{\tau>y\}\quad\mbox{for all }y>y_0.
\end{eqnarray*}
Then there exists a constant $c<\infty$ such that
\begin{eqnarray*}
\P\{\tau>\alpha^k y\} &\le& c\beta^{-k}\P\{\tau>y\}
\quad\mbox{for all }y>0\mbox{ and }k\ge 1,
\end{eqnarray*}
Also,
\begin{eqnarray*}
\E(\tau\wedge\alpha^k y)  &=& \int_0^{\alpha^k y} \P\{\tau>u\}du
\ =\ \alpha^k\int_0^y \P\{\tau>\alpha^k u\}du\\
&\ge& \alpha^k \int_0^y \P\{\tau>u\}du\\ 
&=& \alpha^k \E(\tau\wedge y)
\quad\mbox{for all }y>0\mbox{ and }k\ge 1.
\end{eqnarray*}
 Substituting these bounds into \eqref{F.tau.cond.k}, we conclude that,
 as $x\to\infty$, uniformly for all $k$ such that $x\alpha^k\to\infty$,
 \begin{eqnarray}\label{F.tau.cond.k.2}
\overline F(\gamma x(1-\alpha)\alpha^k)
&=& o\biggl(\frac{\beta^{-k}\alpha^{-k}\P\{\tau>\gamma x(1-\alpha)\}}
{\E(\tau\wedge\gamma x(1-\alpha))}\biggr)\nonumber\\
&=& o\biggl(\frac{(\alpha\beta)^{-k}\P\{\tau>x\}}{\E\tau(x)}\biggr),
\end{eqnarray}
owing to the intermediate regular variation of $\tau$.
Hence, \eqref{M.tau.upper.1} yields that
\begin{eqnarray*}
\P\{\widehat M_{\tau(x)}>\gamma x\} 
&=& o(\P\{\tau>x\})\quad\mbox{as }x\to\infty,
\end{eqnarray*}
provided $\alpha\beta>p$.
This implies \eqref{Mtau.upper} and hence completes the proof 
of the upper tail bound \eqref{S.tau.upper.bound} for $M_\tau$.

Now let us turn to a matching lower bound. We will prove that
\begin{eqnarray}\label{S.tau.lower.bound}
\P\{S_\tau>x\} &\ge& (1+o(1))\P\{\tau>x/a\}\quad\mbox{as }x\to\infty.
\end{eqnarray}
For $\varepsilon>\delta>0$, we have
\begin{eqnarray*}
\P\{S_\tau>x\} &\ge& \P\Bigl\{S_\tau>x,\ \tau>\frac{x}{a-\varepsilon}\Bigr\}\\
&=& \P\Bigl\{\tau>\frac{x}{a-\varepsilon}\Bigr\}
-\P\Bigl\{S_\tau\le x,\ \tau>\frac{x}{a-\varepsilon}\Bigr\},
\end{eqnarray*}
so it suffices to show that
\begin{eqnarray*}
\P\Bigl\{S_\tau\le x,\ \tau>\frac{x}{a-\varepsilon}\Bigr\}
&=& o\Bigl(\P\Bigl\{\tau>\frac{x}{a-\varepsilon}\Bigr\}\Bigr)\quad\mbox{as }x\to\infty,
\end{eqnarray*}
or, equivalently (owing to the intermediate regular variation of $\tau$
satisfying \eqref{i.r.v.gamma}), that, for some $\gamma>0$,
\begin{eqnarray}\label{lower.suff}
\P\Bigl\{S_\tau\le x,\ \tau>\frac{x}{a-\varepsilon}\Bigr\}
&=& o(\P\{\tau>\gamma x\})\quad\mbox{as }x\to\infty.
\end{eqnarray}
In order to obtain \eqref{lower.suff}, we bound this probability as follows:
\begin{eqnarray}\label{Mtau.lower}
 \P\Bigl\{S_\tau\le x,\ \tau>\frac{x}{a-\varepsilon}\Bigr\} &=&
\P\Bigl\{S_\tau-(a-\delta)\tau\le x-(a-\delta)\tau,\ \tau>\frac{x}{a-\varepsilon}\Bigr\}\nonumber\\
&\le& \P\Bigl\{S_\tau-(a-\delta)\tau\le -x\frac{\varepsilon-\delta}{a-\varepsilon},\ 
\tau>\frac{x}{a-\varepsilon}\Bigr\}\nonumber\\
&=& \P\Bigl\{\widecheck S_\tau\le -x\frac{\varepsilon-\delta}{a-\varepsilon},\ 
\tau>\frac{x}{a-\varepsilon}\Bigr\} =: P_2(x),
\end{eqnarray}
where $\widecheck S_k:=S_k-(a-\delta)k = \sum_{j=1}^n \widecheck \xi_j$ is a random walk 
with positive drift ${\mathbb E} \widecheck \xi_j = {\mathbb E} (\xi_j - a +\delta)= \delta$.

To bound the probability $P_2(x)$, we introduce recursively a sequence
$\{\theta_k\}_{k\ge 0}$ of stopping times by letting $\theta_0=0$ and, for $k\ge 0$,
\begin{align*}
\theta_{k+1}
 := \min \{n>\theta_k: \ \widecheck S_n-\widecheck S_{\theta_k} > (n-\theta_k)\delta/2\},
\end{align*}
so that $\widecheck S_{\theta_k}> \theta_k\delta/2$.
All these $\theta$'s are proper random variables due to the positive drift 
$\delta$ of the random walk $\widecheck S_n$.
Moreover, $\theta_k-\theta_{k-1}$, $k\ge 1$, 
are iid random variables with finite mean value, 
\begin{eqnarray}\label{theta.crp}
\E\theta_1 &<& \infty.
\end{eqnarray}
In addition, the distribution $G$ defined by its tail
\begin{eqnarray*}
\overline G(x) &:=& \frac{\P\{\tau>x\}}{\E(\tau\wedge x)},
\end{eqnarray*}
is intermediate regularly varying at infinity as the distribution of $\tau$ is so.
Then, under the condition \eqref{F.tau.cond}, 
it follows from Theorem \ref{thm:cycle.asy.o} below that
\begin{eqnarray}\label{tau.xi.1}
\P\{\theta_1>n\}  &=& o(\overline G(n))\quad\mbox{as }n\to\infty.
\end{eqnarray}
Introduce the minima over disjoint time intervals by
\begin{align*}
L_k := \min_{ \theta_k\le n\le\theta_{k+1}}
\widecheck S_n-\widecheck S_{\theta_k}\ \le\ 0,\quad k\ge 0,
\end{align*}
and notice that
\begin{eqnarray}\label{L.crp}
\{L_k,\ k\ge 0\}\quad\mbox{are iid proper random variables and}\quad \E L_0>-\infty,
\end{eqnarray}
thanks to the inequality 
$|L_0| \le \xi_1^- +\ldots+\xi_{\theta_1}^-$  and Wald's identity. Then, 
\begin{eqnarray*}
P_2(x) 
&=& \sum_{k=0}^\infty \P \Bigl\{\theta_k<\tau\le \theta_{k+1},\
\widecheck S_\tau \le -x\frac{\varepsilon-\delta}{a-\varepsilon},\ 
\tau>\frac{x}{a-\varepsilon}\Bigr\}\\
&\le& \sum_{k=0}^\infty\P\Bigl\{\theta_k<\tau\le \theta_{k+1},\
\widecheck S_\tau-\widecheck S_{\theta_k}\le -x\frac{\varepsilon-\delta}{a}-\delta\theta_k/2,\
\tau>\frac{x}{a-\varepsilon}\Bigr\}\\
&\le& \sum_{k=0}^\infty \P\Bigl\{\theta_k<\tau\le \theta_{k+1},\
\widecheck S_\tau-\widecheck S_{\theta_k} \le -x\frac{\varepsilon-\delta}{a}-\delta k/2,\ 
\tau>\frac{x}{a-\varepsilon}\Bigr\},
\end{eqnarray*}
due to $\theta_k\ge k$. Therefore,
\begin{eqnarray}\label{sigma1.sigma2}
P_2(x)
&\le& \sum_{k=0}^\infty \P\Bigl\{\theta_k<\tau\le \theta_{k+1},\
L_k \le -x\frac{\varepsilon-\delta}{a}-\delta k/2,\ 
\tau>\frac{x}{a-\varepsilon}\Bigr\}\nonumber\\
&=& \sum_{k=0}^\infty \P\Bigl\{\gamma x<\theta_k<\tau\le \theta_{k+1},\
L_k \le -x\frac{\varepsilon-\delta}{a}-\delta k/2,\ 
\tau>\frac{x}{a-\varepsilon}\Bigr\}\nonumber\\
&& +\sum_{k=0}^\infty \P\Bigl\{\theta_k\le\gamma x,\ \theta_k<\tau\le \theta_{k+1},\
L_k \le -x\frac{\varepsilon-\delta}{a}-\delta k/2,\ 
\tau>\frac{x}{a-\varepsilon}\Bigr\}\nonumber\\
&=:& \Sigma_1(x)+\Sigma_2(x),
\end{eqnarray}
for any $\gamma>0$. Firstly,
\begin{eqnarray}\label{sigma1}
\Sigma_1(x) &\le& \sum_{k=0}^\infty \P\Bigl\{\gamma x<\theta_k<\tau,\
L_k \le -x\frac{\varepsilon-\delta}{a}-\delta k/2\Bigr\}.
\end{eqnarray}
Since $\tau$ is independent of the future increments, the event 
$$
\{\gamma x<\theta_k<\tau\}\ =\ \{\gamma x<\theta_k\}\cap\overline{\{\tau\le\theta_k\}}
$$
does not depend on the random variable $L_k$.

Hence, we conclude that the sum on the right-hand side of \eqref{sigma1} is equal to 
\begin{eqnarray*}
\sum_{k=0}^\infty \P\{\gamma x<\theta_k<\tau\}
\P\Bigl\{L_k \le -x\frac{\varepsilon-\delta}{a}-\delta k/2\Bigr\},
\end{eqnarray*}
which may be bounded above by
\begin{eqnarray*}
\P\{\tau>\gamma x\}\sum_{k=0}^\infty
\P\Bigl\{L_0 \le -x\frac{\varepsilon-\delta}{a}-\delta k/2\Bigr\},
\end{eqnarray*}
owing to \eqref{L.crp}.
Here the sum on the right hand side 
can be bounded by
\begin{eqnarray*}
\sum_{k=0}^\infty \frac{2}{\delta}\int_{\delta k/2}^{\delta(k+1)/2} 
\P\Bigl\{L_0 \le -x\frac{\varepsilon-\delta}{a}-y\Bigr\}dy
&=& \frac{2}{\delta} \int_0^\infty
\P\Bigl\{L_0 \le -x\frac{\varepsilon-\delta}{a}-y\Bigr\}dy\\
&\to& 0\quad\mbox{as }x\to\infty,
\end{eqnarray*}
since $\E L_0>-\infty$. Hence, for all $\gamma>0$,
\begin{eqnarray}\label{bound.for.sigma1}
\Sigma_1(x) &=& o(\P\{\tau>\gamma x\})\quad\mbox{as }x\to\infty.
\end{eqnarray}
Secondly, for $\gamma<1/a$,
\begin{eqnarray*}
\Sigma_2(x) &\le& \sum_{k=0}^\infty \P\bigl\{\theta_k\le \gamma x,\ \theta_k<\tau,\
\theta_{k+1}-\theta_k>x(1/a-\gamma)\bigr\}\\
&=& \sum_{k=0}^\infty \P\{\theta_k\le \gamma x,\ \theta_k<\tau\}
\P\{\theta_{k+1}-\theta_k>x(1/a-\gamma)\}\\
&=& \P\{\theta_1>x(1/a-\gamma)\}
\sum_{k=0}^\infty \P\{\theta_k\le \gamma x,\ \theta_k<\tau\},
\end{eqnarray*}
again by the independence of the future property of $\tau$ and by \eqref{L.crp}. 
It follows from \eqref{tau.xi.1} that
\begin{eqnarray}\label{tau.xi.2}
\P\{\theta_1>x(1/a-\gamma)\} 
&=& o\biggl(\frac{\P\{\tau>x(1/a-\gamma)\}}
{\E(\tau\wedge x(1/a-\gamma))}\biggr)
\quad\mbox{as }x\to\infty.
\end{eqnarray}
Therefore,
\begin{eqnarray*}
\Sigma_2(x) &=& o\biggl(\frac{\P\{\tau>x(1/a-\gamma)\}}
{\E(\tau\wedge x(1/a-\gamma))}\biggr)
\sum_{k=0}^\infty \P\{\theta_k\le (\tau\wedge\gamma x)\}\quad\mbox{as }x\to\infty.
\end{eqnarray*}
Since $\theta_k\ge k$,
\begin{eqnarray*}
\sum_{k=0}^\infty \P\{\theta_k\le (\tau\wedge\gamma x)\} &\le&
\sum_{k=0}^\infty \P\{\tau\wedge\gamma x\ge k\}\\
&\le& \E(\tau\wedge \gamma x)+1.
\end{eqnarray*}
Hence, similar to \eqref{F.tau.cond.k.2}, it follows from the last two bounds that
\begin{eqnarray}\label{bound.for.sigma2}
\Sigma_2(x) &=& o(\P\{\tau>x\})\quad\mbox{as }x\to\infty.
\end{eqnarray}
Substituting the upper bounds \eqref{bound.for.sigma1} and \eqref{bound.for.sigma2} 
into \eqref{sigma1.sigma2}, we conclude \eqref{lower.suff}
and hence obtain the lower bound \eqref{S.tau.lower.bound}.
\end{proof}

\section{Arbitrary dependence between $\{\xi_n\}$ and $\tau$}
\label{sec:arb}

In Theorem \ref{thm:asy}, we require $\tau$ to be independent of the future
of the sequence $\{\xi_n\}$. Now we show that this assumption can be omitted
at the cost of strengthening the conditions on both the right and left tails
of the distribution of $\xi$.
%

\begin{theorem}\label{thm:asy.sstar}
Assume that $a:=\E\xi>0$ and that the distribution of $\tau$ is intermediate regularly varying at infinity.
If
\begin{eqnarray}\label{F.tau.cond.sstar.upper}
x \P\{\xi>x\} &=& o(\P\{\tau>x\})\quad\mbox{as }x\to\infty,
\end{eqnarray}
then 
\begin{eqnarray}\label{F.tau.asy.gen.upper}
\P\{M_\tau>x\} &\le& (1+o(1))\P\{\tau>x/a\}\quad\mbox{as  } x\to\infty.
\end{eqnarray}
If
\begin{eqnarray}\label{F.tau.cond.sstar.lower}
\int_x^\infty \P\{\xi<-y\}dy &=& o(\P\{\tau>x\})\quad\mbox{as }x\to\infty,
\end{eqnarray}
then 
\begin{eqnarray}\label{F.tau.asy.gen.lower}
\P\{S_\tau>x\} &\ge& (1+o(1))\P\{\tau>x/a\}\quad\mbox{as  } x\to\infty.
\end{eqnarray}
If both conditions, \eqref{F.tau.cond.sstar.upper} and \eqref{F.tau.cond.sstar.lower}, hold true then 
\begin{eqnarray}\label{F.tau.asy.gen}
\P\{M_\tau>x\}\ \sim\ \P\{S_\tau>x\} &\sim& \P\{\tau>x/a\}\quad\mbox{as  } x\to\infty.
\end{eqnarray}
\end{theorem}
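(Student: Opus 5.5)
Under arbitrary dependence, the decoupling arguments of Theorem \ref{thm:asy} are no longer available. The plan is to replace them by bounds on the whole path of the random walk over a deterministic horizon of order $x$, and to pay for this with the stronger tail conditions. As in the proof of Theorem \ref{thm:asy}, we fix $\varepsilon>\delta>0$ and use the same reductions.

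\textbf{Upper bound.} From \eqref{upper.1} it suffices to show
\[
\P\Bigl\{\max_{n\le x/(a+\varepsilon)}(S_n-(a+\delta)n)>x\tfrac{\varepsilon-\delta}{a+\varepsilon}\Bigr\}=o(\P\{\tau>x\}).
\]
This event no longer involves $\tau$, so dependence is irrelevant. We then need a large deviation bound for the maximum of the negatively drifted walk $\widehat S_n$ over $N\asymp x$ steps at level $\gamma x$, namely that this probability is at most $(c+o(1))\,N\,\P\{\xi>\gamma' x\}$ for some $\gamma'>0$. To get it, truncate the jumps at level $hx$ with $h$ small:
\begin{itemize}
\item On the event that all jumps are at most $hx$, a Fuk--Nagaev / exponential Chebyshev bound for the truncated centred walk gives a probability of order $(N\P\{\xi>hx\})^{r}$ with $r\approx\gamma/h$ large.
\item The event that some jump exceeds $hx$ has probability at most $N\P\{\xi>hx\}$.
\end{itemize}
By \eqref{F.tau.cond.sstar.upper} and \eqref{i.r.v.gamma}, $N\P\{\xi>hx\}=o(\P\{\tau>hx\})=o(\P\{\tau>x\})$, and the first term is even smaller. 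The main obstacle is this large-deviation step with only a first moment on $\xi$: the truncated exponential bound has to be carried out carefully, in the standard Nagaev style, so that the constant drift $-\delta$ absorbs the mean of the truncated jumps.

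\textbf{Lower bound.} As in \eqref{Mtau.lower}, it suffices to show that $\P\{\widecheck S_\tau\le -cx,\ \tau>x/(a-\varepsilon)\}=o(\P\{\tau>\gamma x\})$. Bound this by
\[
\P\Bigl\{\inf_{n\ge x/(a-\varepsilon)}\widecheck S_n\le -cx\Bigr\}\le \P\Bigl\{\inf_{n\ge 0}\widecheck S_n\le -cx\Bigr\},
\]
which again eliminates $\tau$. The walk $\widecheck S_n$ has positive drift $\delta$, so its overall minimum is a proper random variable. By the known heavy-tailed asymptotics for the maximum of a negatively drifted walk, applied to $-\widecheck S_n$ (Veraverbeke-type bound), its tail is bounded by
\[
C\int_{cx}^\infty \P\{\xi<-y\}\,dy,
\]
which is $o(\P\{\tau>x\})$ by \eqref{F.tau.cond.sstar.lower} and \eqref{i.r.v.gamma}. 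We only need an upper bound, valid without subexponentiality. It follows, for instance, by dominating the ladder-height tail by the integrated tail, or more directly by truncation plus an exponential martingale at level $cx$.

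\textbf{Conclusion.} With $\varepsilon\downarrow0$ and intermediate regular variation, these two estimates give \eqref{F.tau.asy.gen.upper} and \eqref{F.tau.asy.gen.lower}. Since $M_\tau\ge S_\tau$, combining them yields \eqref{F.tau.asy.gen}.
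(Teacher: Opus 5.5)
\textbf{Lower bound: a genuine gap.} Your reduction to $\P\{\inf_{n\ge0}\widecheck S_n\le -cx\}=o(\P\{\tau>x\})$ is the paper's. The estimate you then use, $\P\{\inf_{n}\widecheck S_n\le -cx\}\le C\int_{cx}^\infty\P\{\xi<-y\}dy$ ``valid without subexponentiality'', is false. Take $\xi\ge0$ non-degenerate, so that \eqref{F.tau.cond.sstar.lower} holds trivially, and $\delta$ small so that $\P\{\widecheck\xi<0\}>0$. Then the right-hand side vanishes for large $x$, while the left-hand side is positive for every $x$.

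Bounding the first descending ladder height $\psi_-$ by the integrated tail is fine. But $\inf_n\widecheck S_n$ is a geometric sum of ladder heights, and going from one summand to the geometric sum needs a Kesten-type bound, i.e.\ subexponentiality of the dominating law. The integrated left tail of $\xi$ need not be subexponential, or even heavy.

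The missing idea is to put the subexponentiality on the side of $\tau$. By \eqref{F.tau.cond.sstar.lower}, $\P\{\psi_-<-x\}\le\gamma\P\{\tau>x\}$ for $x\ge x_0$. So $\psi_-$ stochastically dominates a variable whose left tail equals $\gamma\P\{\tau>x\}$ beyond $x_0$. That law is subexponential because $\tau$ is intermediate regularly varying. Geometric-sum asymptotics for subexponential summands then give $\limsup_{x}\P\{\inf_n\widecheck S_n<-x\}/\P\{\tau>x\}\le\gamma(1-p)/p$, and you let $\gamma\downarrow0$. This is the paper's argument.

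Your fallback, truncation plus an exponential martingale, does not work as stated on an infinite horizon. If the left tail is unbounded, a jump below $-hx$ occurs almost surely, so truncating at a fixed level gives nothing. With levels growing in $n$, the exponential moment of the truncated jumps is no longer uniformly controlled.

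\textbf{Upper bound: a different but viable route.} The paper dominates $\widehat\xi$ by a strong subexponential, intermediate regularly varying law with tail $\gamma\overline V(x)$, where $\overline V(x)=o(\P\{\tau>x\}/x)$. It then quotes uniform asymptotics for $\P\{\widetilde M_n>y\}$, which give $n\gamma\overline V(y)$. Truncation avoids that machinery and can be made to work, but two things need fixing:
\begin{itemize}
\item The step you leave open is the whole difficulty, and it is not the textbook Fuk--Nagaev inequality. That inequality needs $\E|\xi|^t<\infty$ for some $t>1$, which is exactly the Robert--Segers assumption this theorem removes.
\item Your claimed form $(N\P\{\xi>hx\})^r$ is wrong in general. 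For bounded $\xi$ it vanishes for large $x$, while the probability is in general positive.
\end{itemize}

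What does work is the following. Set $y=hx$ with $h\le\gamma$, and $\lambda=L/y$. The supermartingale $e^{\lambda\widehat S_n}\I\{\widehat\xi_i\le y,\ i\le n\}$ gives the bound $e^{-\lambda\gamma x}$ once $\E[e^{\lambda\widehat\xi};\widehat\xi\le y]\le1$. The negative jumps and the range $0<u\le1/\lambda$ contribute $o(\lambda)$, by $\E|\xi|<\infty$. For $1/\lambda<u\le y$, write $\P\{\xi>u\}\le\epsilon(u)\P\{\tau>u\}/u$ with $\epsilon\downarrow0$, and use $\P\{\tau>z/t\}\le Ct^p\P\{\tau>z\}$, which follows from intermediate regular variation. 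This gives $\lambda\int_{1/\lambda}^ye^{\lambda u}\P\{\xi>u\}du\le C\lambda\,\epsilon(y/L)\,e^L\P\{\tau>y\}/L$. Choose $e^L=w/\P\{\tau>y\}$ with $w\to\infty$ slowly. Then this term is $o(\lambda)$, the drift $-\delta$ wins, and $e^{-\lambda\gamma x}\le\P\{\tau>hx\}/w=o(\P\{\tau>x\})$.
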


Let us remark that the conditions above are not symmetric for the upper and the lower tails for a good reason.
Firstly note that the condition
\begin{eqnarray*}
\int_x^\infty \P\{\xi>y\}dy &=& o(\P\{\tau>x\})
\end{eqnarray*}
would imply the condition \eqref{F.tau.cond.sstar.upper} in the case of intermediate regularly varying $\tau$,  because
\begin{eqnarray*}
\int_x^\infty \P\{\xi>y\}dy\ \ge\ \int_x^{2x} \P\{\xi>y\}dy &\ge& x\P\{\xi>2x\}.
\end{eqnarray*}
However, the converse implication does not hold, as the following example demonstrates:
in the case where $\P\{\xi>x\}\sim 1/x\log^2x$ as $x\to\infty$, 
\begin{eqnarray*}
\int_x^\infty \P\{\xi>y\}dy &\sim& \frac{1}{\log x} \ \gg\ \frac{1}{\log^2 x}
\ \sim\ x\P\{\xi>x\}\quad\mbox{as  } x\to\infty.
\end{eqnarray*}

In \cite[Theorem 3.1]{RS}, arbitrary dependence of $\{\xi_n\}$ and $\tau$ is considered
under the condition \eqref{F.tau.cond.sstar.upper}. 
In addition, it is assumed there that $\xi\ge 0$ and $\E\xi^r<\infty$ for some $r>1$;
the latter condition excludes distributions such that the one above.
Our result shows that the condition \eqref{F.tau.cond.sstar.upper} is indeed sufficient
for the upper bound without any further assumptions on $\xi$.

For the lower bound \eqref{F.tau.asy.gen.lower}, the stronger condition \eqref{F.tau.cond.sstar.lower} 
on the left tail of $\xi$ is needed due to the proving technique we follow;
we believe it is not only because of the proving technique but also otherwise the lower bound may fail.

\begin{proof}[Proof of Theorem \ref{thm:asy.sstar}.]
In the proof of the upper tail bound for $M_\tau$ in the last theorem, 
we do not make use of the independence of the future up to
the equation \eqref{Mtau.upper}. Therefore, it suffices to show the following tail bound 
for the running maximum $\widehat M_n$ of a negatively driven random walk $\widehat S_n$
with jumps $\widehat\xi_j=\xi_j - a-\delta$, $\E \widehat \xi_j =-\delta$,
 until time $x/(a+\varepsilon)$:
\begin{eqnarray}\label{wh.minfty.upper}
\P\Bigl\{\widehat M_{x/(a+\varepsilon)}>x\frac{\varepsilon-\delta}{a+\varepsilon}\Bigr\}
&=& o(\P\{\tau>x\})\quad\mbox{as }x\to\infty.
\end{eqnarray}
Thus, we may start the current proof from here. Without loss of generality,
\begin{eqnarray*}
\overline{\widehat F}(x) &\le& \frac{\P\{\tau>x\}}{x}\quad\mbox{for all }x\ge 1,
\end{eqnarray*}
where $\widehat F$ is the distribution of $\widehat\xi_1$. Consider a distribution $G$ with 
\begin{eqnarray*}
\overline G(x) &:=& \left\{
\begin{array}{ll}
\P\{\tau>x\}/x&\mbox{if }x\ge 1,\\
1&\mbox{if }x<1.
\end{array}
\right.
\end{eqnarray*}

From the condition $\overline{\widehat F}(x)=o(\overline G(x))$, we can identify an increasing concave 
function $g(x)\uparrow\infty$ such that $\overline{\widehat F}(x)\le\overline G(x)/g(x)$.
As a concave function, $g(x)$ is intermediate regularly varying at infinity.
Since $G$ is intermediate regular varying as well, the distribution $H$
defined via its tail as $\overline H(x):=\overline G(x)/g(x)$ is also intermediate
regularly varying. We have $\overline{\widehat F}(x)\le\overline H(x)$
for all $x>1$ and $\overline H(x)=o(\overline G(x))$ as $x\to\infty$.

By the main result in \cite{D}, there exists a regularly varying distribution $J$ with index $-1$ 
and finite expectation such that $\overline{\widehat F}(x)\le\overline J(x)$ for all $x$.
The distribution $V$, defined via its tail as $\overline V(x):=\min(\overline J(x),\overline H(x))$, 
is intermediate regularly varying, as both  $J$ and $H$ are, and its expectation is finite because $J$ has a finite expectation.
Thus, $V$ is a strong subexponential distribution by \cite[Theorem 3.29]{FKZ}.

Fix $\gamma \in (0,1)$ and $\widetilde x\in\R$, and consider an intermediate
regularly varying distribution $\widetilde F$ defined as follows:
\begin{eqnarray*}
\widetilde F(x)  &:=& 
\left\{
\begin{array}{ll}
(1-\gamma)\widehat F(x) &\mbox{if }x\le \widetilde x;\\
1-\gamma\overline V(x) &\mbox{if }x>\widetilde x.
\end{array}
\right.
\end{eqnarray*}
Since $\widetilde F$ has a finite mean, it is also strong subexponential.

Let $\widetilde\xi_n$, $n\ge 1$, be independent random variables with common distribution $\widetilde F$.
Let $\widetilde S_n=\widetilde\xi_1+\ldots+\widetilde\xi_n$.
By the construction, $\widetilde\xi_1\ge_{st}\widehat\xi_1$ and hence $\widetilde S_n\ge_{st}\widehat S_n$.
Therefore, $\widetilde M_n\ge_{st}\widehat M_n$, which means that 
\begin{eqnarray}\label{Mhat.Mtilde}
\P\{\widehat M_n>y\} &\le& \P\{\widetilde M_n>y\}\quad\mbox{for all }n\ge 1\mbox{ and }y.
\end{eqnarray}
Choose $\gamma>0$ so small and $\widetilde x\in\R$ so large that $\E\widetilde\xi<-\delta/2$.
Since the distribution $\widetilde F$ is strong subexponential,
we may apply Theorem 5.3 in \cite{FKZ} to derive that, as $y\to\infty$,
\begin{eqnarray*}
\P\{\widetilde M_n>y\} &\sim&
 \frac{1}{|\E\widetilde\xi|} \int_y^{y+n|\E\widetilde\xi|} \P\{\widetilde\xi>z\}dz\\
&=& \frac{\gamma}{|\E\widetilde\xi|} \int_y^{y+n|\E\widetilde\xi|} \overline V(z)dz\\
&\le& n\gamma \overline V(y),
\end{eqnarray*}
where the equivalence in the first line is uniform for all $n$. 
Hence, it follows from \eqref{Mhat.Mtilde} that, as $y\to\infty$,  uniformly for all $n\ge 1$,
\begin{eqnarray*}
\P\{\widehat M_n>y\} &\le& n(\gamma+o(1)) \overline V(y).
\end{eqnarray*}
which implies that
\begin{eqnarray*}
\P\Bigl\{\widehat M_{x/(a+\varepsilon)}>x\frac{\varepsilon-\delta}{a+\varepsilon}\Bigr\} 
&\le& \frac{x}{a+\varepsilon}(\gamma+o(1)) \overline V\Bigl(x\frac{\varepsilon-\delta}{a+\varepsilon}\Bigr)\\
&=& O(x\overline V(x))\quad\mbox{as }x\to\infty,
\end{eqnarray*}
since $V$ is intermediate regularly varying. Taking into account that 
$\overline V(x)\le\overline H(x)=o(\overline G(x))$ we conclude \eqref{wh.minfty.upper}
and hence \eqref{F.tau.asy.gen.upper}.

Similar to the upper bound, the proof of the lower tail bound in the last theorem 
applies up to the equation \eqref{Mtau.lower} 
whatever a dependence between $\{\xi_n\}$ and $\tau$  is. Therefore, 
it suffices to establish the following tail bound for the overall minimum $\widecheck M_\infty$
of the positively driven random walk $\widecheck S_n$ 
with jumps $\widecheck\xi_k=\xi-a+\delta$ having positive mean $\E\widecheck\xi_1=\delta$:
\begin{eqnarray}\label{wh.minfty.lower}
\P\{\widecheck M_\infty<-x\}
&=& o(\P\{\tau>x\})\quad\mbox{as }x\to\infty.
\end{eqnarray}
Thus, we may start the current proof from here. 

It follows from the ladder height structure of a random walk (see e.g. \cite[(5.20)]{FKZ}) 
that the distribution of the overall maximum possesses the following representation:
for any Borel set $B\subseteq(-\infty,0)$,
\begin{eqnarray*}
\P\{\widecheck M_\infty\in B\} &=& p\sum_{k=1}^\infty
\P\{\psi_-(1)+\ldots+\psi_-(k)\in B\}(1-p)^k,
\end{eqnarray*}
where $\psi_-(k)$ are independent copies of the first strictly descending ladder height
$\widecheck S_{\eta_-}$ conditioned to $\eta_-<\infty$; here $\eta_-$ is
the first strictly descending ladder epoch, $\eta_-:=\min\{k\ge1:\widecheck S_k<0\} \le\infty$
which is a defective random variable, so
\begin{eqnarray*}
p &:=& \P\{\widecheck M_\infty=0\}=\P\{\widecheck S_k\ge0\mbox{ for all }k\ge1\}
=\P\{\eta_-=\infty\}\ >\ 0.
\end{eqnarray*}
It follows from \cite[Section 5.5]{FKZ} that there exists a constant $c<\infty$ such that
\begin{eqnarray*}
\P\{\psi_-(1)<-x\} &\le& c\int_x^\infty \P\{\widecheck\xi<-y\}dy\quad\mbox{for all }x>0.
\end{eqnarray*}
Therefore, owing to the condition \eqref{F.tau.cond.sstar.upper}, for any $\gamma>0$,
there exists an $x_0$ such that
\begin{eqnarray*}
\P\{\psi_-(1)< -x\} &\le& \gamma \P\{\tau>x\}\quad\mbox{for all }x\ge x_0.
\end{eqnarray*}
We can choose $x_0$ so large that $\gamma \P\{\tau>x_0\}<1$.
Let us consider a random variable $\psi_-^{(\gamma)}$ with distribution
\begin{eqnarray*}
\P\{\psi_-^{(\gamma)}<-x\} &=& \left\{
\begin{array}{ll}
1 &\mbox{ for }x<x_0;\\
\gamma\P\{\tau>x\} & \mbox{ for }x\ge x_0.
\end{array}
\right.
\end{eqnarray*}
By the construction, $\psi_-\ge_{st}\psi_-^{(\gamma)}$, which implies that
\begin{eqnarray*}
\P\{\widecheck M_\infty<-x\} &\le& p\sum_{k=1}^\infty
\P\{\psi_-^{(\gamma)}(1)+\ldots+\psi_-^{(\gamma)}(k)<-x\}(1-p)^k,
\end{eqnarray*}
where $\psi_-^{(\gamma)}(n)$, $n\ge 1$, are independent copies of $\psi_-^{(\gamma)}$.
Since $\tau$ has an intermediate regularly varying distribution,
it is subexponential, see e.g. \cite[Section 3.5]{FKZ}. 
Hence, the random variable $\psi_-^{(\gamma)}$ is subexponential too.
Therefore, we may apply Theorem 3.37 from \cite{FKZ} to conclude that, as $x\to\infty$,
\begin{eqnarray*}
p\sum_{k=1}^\infty
\P\{\psi_-^{(\gamma)}(1)+\ldots+\psi_-^{(\gamma)}(k)<-x\}(1-p)^k
&\sim& \frac{1-p}{p}\P\{\psi_-^{(\gamma)}<-x\}\\
&=& \frac{1-p}{p}\gamma\P\{\tau>x\}\quad\mbox{for }x>x_0.
\end{eqnarray*}
Thus, for any fixed $\gamma>0$,
\begin{eqnarray*}
\limsup_{x\to\infty}\frac{\P\{\widecheck M_\infty<-x\}}{\P\{\tau>x\}} 
&\le& \frac{1-p}{p}\gamma,
\end{eqnarray*}
which in turn yields \eqref{wh.minfty.lower}. This completes the proof of the lower bound  
\eqref{F.tau.cond.sstar.lower}.
\end{proof}

\section{Tail asymptotics for the cycle length}
\label{sec:cycle}

Let $\theta=\min\{n\ge 1:S_n\le 0\}$.
In the next result, we derive the tail asymptotics for $\theta$
in the intermediate regularly varying case. A similar result
can be found in \cite{DSh} under the additional condition that
$\E|\xi|^r<\infty$ for some $r>1$; 
the proof there is based on the approach suggested in \cite{EH}.
See also \cite[Theorem 3.1]{FM}, where an analogous result is proved
for the busy period in the framework of a single-server queue.
The proof in \cite{FM} is based on subexponential
asymptotics for a negatively driven random walk observed until a stopping time.

\begin{theorem}\label{thm:cycle.asy}
Let $\E\xi=-a<0$ and let the distribution $F$ of $\xi$ 
be intermediate regularly varying at infinity. Then
\begin{eqnarray*}
\P\{\theta>n\} &\sim& \E\theta\overline F(na)\quad\mbox{as }n\to\infty.
\end{eqnarray*}
\end{theorem}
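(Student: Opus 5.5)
We prove the two bounds separately, following the principle of a single big jump. Throughout, $F$ is intermediate regularly varying, so $\overline F(na(1\pm\varepsilon))\sim\overline F(na)$ up to factors tending to $1$ as $\varepsilon\downarrow0$. Since $\E\xi<0$ and $\E|\xi|<\infty$, we have $\E\theta<\infty$. Moreover, $\{\theta>k\}$ is determined by $\xi_1,\dots,\xi_k$, so $\theta$ is a stopping time.

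\textbf{Lower bound.} Fix $\varepsilon>0$ and a large constant $A$. For each $k\ge1$, consider the event $B_k$ on which the following hold:
\begin{itemize}
\item[(a)] $\theta>k-1$ and $S_{k-1}\ge -A$;
\item[(b)] $\xi_k>na(1+\varepsilon)+A$;
\item[(c)] the walk started afresh at time $k$ satisfies $\min_{j\le n}(S_{k+j}-S_k)>-na(1+\varepsilon)$.
\end{itemize}
Each such trajectory has $\theta>k+n>n$. The events $B_k$ are disjoint in $k$, because (b) makes $S_k>0$ while on $B_k$ the walk has not yet hit $(-\infty,0]$ by time $k+n$. The pieces (a), (b), (c) are independent. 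By the law of large numbers, (c) has probability tending to $1$. Hence
\[
\P\{\theta>n\}\ge(1+o(1))\,\overline F(na(1+\varepsilon)+A)\sum_{k\ge1}\P\{\theta>k-1,\ S_{k-1}\ge-A\}.
\]
As $A\to\infty$ the sum tends to $\sum_k\P\{\theta\ge k\}=\E\theta$. Letting $\varepsilon\to0$ and using the intermediate regular variation of $F$ gives $\liminf_n \P\{\theta>n\}/\overline F(na)\ge\E\theta$.

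\textbf{Upper bound.} This is the main obstacle, since only $\E|\xi|<\infty$ is available and no higher moments. The plan is to truncate jumps at level $\varepsilon n$ and split
\[
\P\{\theta>n\}\le\P\{\theta>n,\ \xi_k\le\varepsilon n\ \forall k\le n\}+\sum_{k\le n}\P\{\theta\ge k,\ \xi_k>\varepsilon n,\ \theta>n\}.
\]

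\emph{Big-jump term.} For the $k$-th summand we use independence of $\{\theta\ge k\}$ from $\xi_k$ and from the future. After the big jump, survival beyond $n$ requires $S_{k-1}+\xi_k+\min_{j\le n-k}(S_{k+j}-S_k)>0$. By the LLN this essentially forces $\xi_k>(n-k)a(1-\varepsilon)-S_{k-1}$. The relevant $k$ are $O(1)$ in distribution, because $\sum_k\P\{\theta\ge k\}=\E\theta<\infty$ and $S_{k-1}$ is tight on $\{\theta\ge k\}$. Combined with dominated convergence and the bound $\overline F(\varepsilon n)\le c\,\overline F(na)$ from \eqref{i.r.v.gamma}, this should give $(1+o(1))\E\theta\,\overline F(na(1-2\varepsilon))$. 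The delicate region is $k$ comparable to $n$: there we use that $\P\{\theta>k\}$ itself decays and is summable, so its contribution is $o(\overline F(na))$.

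\emph{Truncated term.} We need $\P\{\theta>n,\ \text{all }\xi_k\le\varepsilon n\}=o(\overline F(n))$. First try: survival to time $n$ forces $S_n>0$, a deviation of order $na$ against drift $-a$, achieved with jumps all below $\varepsilon n$. A Fuk--Nagaev type exponential bound for truncated sums gives
\[
\P\{S_n>0,\ \max_{k\le n}\xi_k\le\varepsilon n\}\le \bigl(c\,n\overline F(\varepsilon n)\bigr)^{1/(2\varepsilon)}+e^{-cn}.
\]
However, $n\overline F(\varepsilon n)$ need not be small under just a first moment. To handle this we instead use \emph{at least two} jumps of size $\ge\varepsilon n$. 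The expected cost is $O\bigl((\E(\theta\wedge n)\,\overline F(\varepsilon n))^2\bigr)$ via the stopping-time structure, and $\E(\theta\wedge n)\,\overline F(\varepsilon n)\to0$ since $\E\theta<\infty$ and $\overline F(\varepsilon n)\to0$. This iterated two-jump estimate, combined with truncation at a level $h(n)=o(n)$ for the no-big-jump case (using exponential Chernoff bounds for jumps truncated at $h(n)$ and intermediate regular variation to absorb the error), is the step I expect to require the most care. Letting $\varepsilon\to0$ completes the proof.
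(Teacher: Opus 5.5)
\textbf{The upper bound has a genuine gap.} Your lower bound and your treatment of the big-jump term are essentially sound. The upper bound, however, hinges on showing $\P\{\theta>n,\ \max_{k\le n}\xi_k\le\varepsilon n\}\le\delta(\varepsilon)\overline F(n)$ with $\delta(\varepsilon)\to0$, and your sketch does not establish this. Several of its ingredients are off.

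First, $n\overline F(\varepsilon n)$ does tend to zero under a first moment, since $x\overline F(x)\to0$. The real obstruction is that $(n\overline F(\varepsilon n))^{1/(2\varepsilon)}$ can decay more slowly than any power of $n$ (take $\overline F(x)\sim 1/(x\log^2x)$), so it is not $o(\overline F(n))$.

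Second, your ``two jumps of size $\ge\varepsilon n$'' estimate concerns trajectories that are excluded from the truncated term, so it does not address that term at all. Moreover, the claimed cost $O((\E(\theta\wedge n)\overline F(\varepsilon n))^2)$ is not right. After the first big jump the walk survives for a time of order $n$, so the second jump ranges over a window of order $n$; the natural bound is $\E\theta\,\overline F(\varepsilon n)\cdot n\overline F(\varepsilon n)$.

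Third, truncation at $h(n)=o(n)$ with exponential bounds only disposes of paths with no jump above $h(n)$. The event that some jump in $(h(n),\varepsilon n]$ occurs before $\theta\wedge n$ has probability of order $\overline F(h(n))$. This is not $o(\overline F(n))$: for a regularly varying tail of index $\beta\ge1$, the ratio $\overline F(h(n))/\overline F(n)\to\infty$. To discard these paths you must show that after such a jump the walk is still unlikely to survive until time $n$, which is the original problem again. The two requirements on $h(n)$ also pull in opposite directions. It must be small enough for the exponential bound to beat $\overline F(n)\ge n^{-C}$, yet large enough for multiple-jump costs to be $o(\overline F(n))$. Some recursive or multi-scale control of survival driven by moderate jumps is needed, and it is missing.

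\textbf{How the paper avoids this.} The paper sidesteps path-level analysis entirely. Feller's identity shows that $\P\{\theta>n\}/\E\theta$ is a compound Poisson mass function whose summands have law $p_n=\P\{S_n>0\}/(n\log\E\theta)$. So one only needs the fixed-time asymptotics $\P\{S_n>0\}\sim n\overline F(na)$. The upper bound there comes from the uniform asymptotics for the maximum of $S_k+k(a-\varepsilon)$, whose increments are strong subexponential; the lower bound comes from one big jump. Local subexponentiality of $p_n$ and Kesten's bound then finish the proof.

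Within your own framework, one way to close the upper bound is the inclusion $\{\theta>n\}\subseteq\{\max_{k\le\theta}\widetilde S_k>n(a-\varepsilon)\}$ with $\widetilde S_k=S_k+k(a-\varepsilon)$. Combine it with the known asymptotics $\P\{\max_{k\le\sigma}\widetilde S_k>x\}\sim\E\sigma\,\P\{\widetilde\xi>x\}$ for a negatively drifted walk with strong subexponential increments $\widetilde\xi$, stopped at a stopping time $\sigma$ with $\E\sigma<\infty$; here take $\sigma=\theta$.

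\textbf{A minor point in the lower bound.} Your events $B_k$ are not disjoint, since two jumps exceeding $na(1+\varepsilon)$ may both occur. Restricting to $k\le K$ and subtracting the pairwise intersections, whose total probability is $O(K^2\overline F(na)^2)$, repairs this.
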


\begin{proof}
We also follow the approach suggested in \cite{EH}.
Due to Feller \cite[Section XVIII.4]{Feller},
the probability generating function $\varphi(s)=\E s^\theta$ of $\theta$, $|s|\le 1$, satisfies the equality
\begin{eqnarray*}
\log\frac{1-\varphi(s)}{1-s} &=& \sum_{n=1}^\infty s^n\frac{\P\{S_n>0\}}{n}.
\end{eqnarray*}
In particular,
\begin{eqnarray*}
\lambda\ :=\ \log\E\theta &=& \sum_{n=1}^\infty \frac{\P\{S_n>0\}}{n}.
\end{eqnarray*}
Hence, the probability generating function $\widehat\varphi(s)$ of the probability
mass function $q_n:=\P\{\theta>n\}/\E\theta$, $n\ge 0$, satisfies the equalities
\begin{eqnarray}\label{rep.for.tau}
\widehat\varphi(s) \ =\ \frac{1}{\E\theta}\frac{1-\varphi(s)}{1-s} 
&=& e^{\lambda\sum_{n=1}^\infty (s^n-1)p_n},
\end{eqnarray}
where $p_n:=\P\{S_n>0\}/n\lambda$ is a probability mass function.
Let $\eta$ be a random variable with probability mass function $p_n$
and let $\eta_n$, $n\ge 1$, be independent copies of $\eta$.
Then it follows from \eqref{rep.for.tau} that 
\begin{eqnarray}\label{Poisson.repr}
q_n &=& \sum_{k=1}^\infty \frac{\lambda^k}{k!}e^{-\lambda}
\P\{\eta_1+\ldots+\eta_k=n\}.
\end{eqnarray}

Fix an $\varepsilon\in(0,a)$ and consider iid random variables 
$\xi_{\varepsilon,n}:=\xi_n+a-\varepsilon$ with negative mean 
$\E\xi_{\varepsilon,n}=-\varepsilon$.
Set $S_{\varepsilon,n}:=\xi_{\varepsilon,1}+\ldots+\xi_{\varepsilon,n}=S_n+n(a-\varepsilon)$ 
and $M_{\varepsilon,n}:=\max_{k\le n}S_{\varepsilon,k}$. 
Since any intermediate regularly varying distribution
with finite mean is strong subexponential (see, e.g. \cite[Theorem 3.29]{FKZ}),
we can apply \cite[Theorem 5.3]{FKZ} to conclude that
\begin{eqnarray*}
\P\{M_{\varepsilon,n}>x\} &\sim& \frac{1}{\varepsilon}
\int_x^{x+n\varepsilon} \P\{\xi_{\varepsilon,1}>y\}dy
\quad\mbox{as }x\to\infty\mbox{ uniformly for all }n\ge1.
\end{eqnarray*}
Therefore,
\begin{eqnarray*}
\P\{M_{\varepsilon,n}>x\} &\le& n(1+o(1)) \P\{\xi_{\varepsilon,1}>x\}\\
&\sim& n\P\{\xi>x\}\quad\mbox{as }x\to\infty\mbox{ uniformly for all }n\ge1.
\end{eqnarray*}
Taking into account the inequality
$S_n=S_{\varepsilon,n}-n(a-\varepsilon) \le M_{\varepsilon,n}-n(a-\varepsilon)$, we obtain that
\begin{eqnarray}\label{Sn.upper}
\P\{S_n>0\} &\le& \P\{M_{\varepsilon,n}>n(a-\varepsilon)\}\nonumber\\
&\le& n(1+o(1))\P\{\xi>n(a-\varepsilon)\}\quad\mbox{as }n\to\infty.
\end{eqnarray}
On the other hand, the events
$$
\{\xi_i>n(a+\varepsilon),\xi_k\le na\mbox{ for all }k\le n,k\not=i\}
$$ 
are disjoint for different $i\le n$, hence
\begin{eqnarray}\label{Sn.lower}
\lefteqn{\P\{S_n>0\}}\nonumber\\
&\ge& n\P\{\xi_n>n(a+\varepsilon),S_{n-1}>n(-a-\varepsilon),
\xi_k\le na\mbox{ for all }k\le n-1\}\nonumber\\
&=& n\P\{\xi_n>n(a+\varepsilon)\}
\P\{S_{n-1}>n(-a-\varepsilon),\xi_k\le na\mbox{ for all }k\le n-1\}\nonumber\\
&\sim& n\P\{\xi>n(a+\varepsilon)\}\quad\mbox{as }n\to\infty,
\end{eqnarray}
because
\begin{eqnarray*}
\lefteqn{\P\{S_{n-1}>n(-a-\varepsilon),\xi_k\le na\mbox{ for all }k\le n-1\}}\\
&\ge& 1-\P\{S_{n-1}\le n(-a-\varepsilon)\}-(n-1)\P\{\xi_1>na\}
\ \to\ 1\quad\mbox{as }n\to\infty,
\end{eqnarray*}
by the law of large numbers and the condition $\E|\xi_1|<\infty$.

Letting $\varepsilon\downarrow 0$ in \eqref{Sn.upper} and \eqref{Sn.lower}
and taking into account the intermediate regular variation of $\xi$, we conclude that
\begin{eqnarray*}
\P\{S_n>0\} &\sim& n\P\{\xi>na\}\quad\mbox{as }n\to\infty.
\end{eqnarray*}
This yields that the random variable $\eta$ satisfies the following asymptotic relation
\begin{eqnarray*}
\P\{\eta_1=n\} &\sim& \P\{\xi>na\}/\lambda\quad\mbox{as }n\to\infty,
\end{eqnarray*}
which ensures that the distribution of $\eta_1$ is locally subexponential, 
see \cite[Theorem 4.14]{FKZ}. In turn, this allows us to apply Kesten's bound
(see \cite[Theorem 4.11]{FKZ}) and to conclude that
\begin{eqnarray*}
\sum_{k=1}^\infty \frac{\lambda^k}{k!}e^{-\lambda}\P\{\eta_1+\ldots+\eta_k=n\}
&\sim& \P\{\eta_1=n\}\sum_{k=1}^\infty k\frac{\lambda^k}{k!}e^{-\lambda}\\
&=& \P\{\eta_1=n\}\lambda\\
&\sim& \P\{\xi>na\}\quad\mbox{as }n\to\infty.
\end{eqnarray*}
Then it follows from the representation \eqref{Poisson.repr} that
\begin{eqnarray*}
\P\{\theta>n\} &=& \E\theta q_n\ \sim\ \E\theta\P\{\xi>na\}\quad\mbox{as }n\to\infty
\end{eqnarray*}
and the proof is complete.
\end{proof}

In the next result we provide an upper bound for the tail distribution of $\theta$ 
in the case where $\xi$ is not necessarily intermediate regularly varying. 
We only assume that the tail distribution of $\xi$
is dominated by an intermediate regularly varying distribution,
not necessarily with finite expectation which is beyond the scope of 
Theorem \ref{thm:cycle.asy}.

\begin{theorem}\label{thm:cycle.asy.o}
Let $\E\xi=-a<0$ and let 
\begin{eqnarray}\label{cond.upper.xi}
\overline F(x)\ :=\ \P\{\xi>x\} &=& o(\overline G(x))\quad\mbox{as }x\to\infty,
\end{eqnarray}
for some intermediate regularly varying distribution $G$. Then
\begin{eqnarray*}
\P\{\theta>n\} &=& o(\overline G(n))\quad\mbox{as }n\to\infty.
\end{eqnarray*}
\end{theorem}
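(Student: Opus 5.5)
We reduce to Theorem~\ref{thm:cycle.asy} by a stochastic domination argument. Enlarging the jumps can only make $\theta=\min\{n\ge1:S_n\le0\}$ stochastically larger. If $\xi_n\le\widetilde\xi_n$ pathwise, then $S_n\le\widetilde S_n$, so $\theta\le\widetilde\theta$. It therefore suffices to build, for each small $\gamma>0$, a random variable $\widetilde\xi^{(\gamma)}\ge_{st}\xi$ with the following properties: its distribution is intermediate regularly varying, it has negative mean $-\widetilde a$ with $\widetilde a$ bounded away from $0$ uniformly in small $\gamma$, and its right tail equals $\gamma\overline V(x)$ for large $x$, where $\overline V\le\overline G$. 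Theorem~\ref{thm:cycle.asy} then gives
\[
\P\{\theta>n\}\le\P\{\widetilde\theta>n\}\sim\E\widetilde\theta\,\gamma\overline V(n\widetilde a)\le c\,\gamma\,\overline G(n),
\]
where the last step uses \eqref{i.r.v.gamma}. Since $\E\widetilde\theta$ stays bounded as $\gamma\downarrow0$, letting $\gamma\downarrow0$ gives the claim.

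\textbf{Construction.} Since $G$ need not have finite mean, Theorem~\ref{thm:cycle.asy} cannot be applied with tail $\overline G$ directly, so we first find a suitable intermediate regularly varying tail with finite mean lying between $\overline F$ and $\overline G$.
\begin{itemize}
\item First, use \eqref{cond.upper.xi} to choose an increasing concave $g\uparrow\infty$ with $\overline F\le\overline G/g$, exactly as in the proof of Theorem~\ref{thm:asy.sstar}. Then $\overline H:=\overline G/g$ is intermediate regularly varying and $o(\overline G)$.
\item Second, $\E\xi^+<\infty$ because $\E\xi$ is finite. By the result of \cite{D}, there is a regularly varying $J$ with index $-1$ and finite mean such that $\overline F\le\overline J$.
\item Third, set $\overline V:=\min(\overline J,\overline H)$. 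This tail is intermediate regularly varying, has finite mean, and dominates $\overline F$.
\end{itemize}
Now define $\widetilde F$ as in the proof of Theorem~\ref{thm:asy.sstar}: $\widetilde F(x)=(1-\gamma)F(x)$ for $x\le\widetilde x$ and $\widetilde F(x)=1-\gamma\overline V(x)$ for $x>\widetilde x$, with $\widetilde x$ large. Choosing $\gamma$ small and $\widetilde x$ large keeps the mean at most $-a/2$. The domination $\widetilde\xi\ge_{st}\xi$ holds by construction, since we need $\widetilde F\le F$. For $x\le\widetilde x$ this is trivial. For $x>\widetilde x$ it requires $\gamma\overline V(x)\ge\overline F(x)$. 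This follows for large $\widetilde x$ because $\overline F\le\overline H/g$ type slack is available: we replace $g$ by $\sqrt g$ when defining $H$, which gives $\overline F=o(\overline V)$ at least where $\overline V=\overline H$. Where $\overline V=\overline J$, we instead pick $J$ with $\overline F=o(\overline J)$, which is possible by applying \cite{D} to a slightly heavier integrable tail.

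\textbf{Main obstacle.} The delicate step is this bookkeeping: guaranteeing $\gamma\overline V\ge\overline F$ eventually, for arbitrarily small $\gamma$, while keeping $V$ intermediate regularly varying with finite mean. There is a second, smaller point. The constant $\E\widetilde\theta$ must remain bounded as $\gamma\to0$, and the asymptotics in Theorem~\ref{thm:cycle.asy} are applied for each fixed $\gamma$, with the limit $\gamma\downarrow0$ taken only afterwards. Boundedness of $\E\widetilde\theta$ follows from Wald's identity, which bounds $\E\widetilde\theta$ by a constant times $(1+\E|\widetilde\xi|)/\widetilde a$, together with the uniform mean bound $\widetilde a\ge a/2$.
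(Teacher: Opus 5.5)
Your reduction is the paper's: dominate $\xi$ by a variable whose right tail is built from $\overline V=\min(\overline J,\overline H)$, then apply Theorem~\ref{thm:cycle.asy}. Your insistence on $\overline F=o(\overline V)$, via $\sqrt g$ and a heavier integrable tail, is exactly what the domination needs; the paper records only $\overline F\le\overline V$. However, two steps fail as written.

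\textbf{The mean.} You claim that ``$\gamma$ small and $\widetilde x$ large keeps the mean at most $-a/2$''. But $\widetilde F$ puts mass $\gamma+(1-\gamma)\overline F(\widetilde x)$ on $[\widetilde x,\infty)$, most of it an atom at $\widetilde x$. Hence
\[
\E\widetilde\xi\ \ge\ (1-\gamma)\E(\xi\wedge\widetilde x)+\gamma\widetilde x\ \approx\ -a+\gamma\widetilde x .
\]
Domination forces $\gamma\ge\sup_{x>\widetilde x}\overline F(x)/\overline V(x)\ge\sup_{x>\widetilde x}\overline F(x)/\overline G(x)$, and nothing in the hypotheses makes $\gamma\widetilde x$ small. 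For $\overline G(x)=1/(x\log^2x)$ and $\overline F(x)=1/(x\log^3x)$ you get $\gamma\widetilde x\ge\widetilde x/\log\widetilde x\to\infty$. So every admissible pair with $\widetilde x$ large has positive mean. (The paper's $\widehat F$ carries the same atom.)

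The cure is to avoid the atom. Take $\zeta$ independent of $\xi$ with $\P\{\zeta>x\}=\gamma\overline V(x)$ for $x\ge x_1>0$ and $\zeta=-\infty$ otherwise, and set $\widetilde\xi=\max(\xi,\zeta)$. Then $\widetilde\xi\ge\xi$ pathwise. For $x\ge x_1$, $\P\{\widetilde\xi>x\}=\overline F(x)+\gamma F(x)\overline V(x)\sim\gamma\overline V(x)$, which is intermediate regularly varying because $\overline F=o(\overline V)$. Moreover,
\[
\E\widetilde\xi-\E\xi\ \le\ \gamma\Bigl(x_1\overline V(x_1)+\int_{x_1}^\infty\overline V(y)\,dy+\overline V(x_1)\E\xi^-\Bigr)\ <\ a/2
\]
for $x_1$ large, uniformly in $\gamma\le 1$.

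\textbf{The limit $\gamma\downarrow0$.} This step rests on a false bound. Wald gives $\widetilde a\,\E\widetilde\theta=\E|\widetilde S_{\widetilde\theta}|$, but this undershoot is not controlled by $\E|\widetilde\xi|$. Take jumps $+1$ with probability $1-2/N$ and $-N$ with probability $2/N$. Then $\E|\xi|\le 3$ and the drift is below $-1$. Yet $\theta$ is at least the time of the first $-N$ jump, so $\E\theta\ge N/2$.

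The limit is also unnecessary. Since $\overline V\le\overline H=\overline G/\sqrt g=o(\overline G)$, one fixed $\gamma$ already gives
\[
\P\{\theta>n\}\ \le\ (1+o(1))\,\E\widetilde\theta\,\gamma\overline V(n\widetilde a)\ =\ O(\overline V(n))\ =\ o(\overline G(n)),
\]
by intermediate regular variation of $V$. This is how the paper concludes: $\varepsilon$ stays fixed and the smallness comes from $\overline H=o(\overline G)$.

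If you prefer to keep $\gamma\downarrow0$, note that the $\max(\xi,\zeta)$ family is stochastically increasing in $\gamma$ for fixed $x_1$. Then $\E\widetilde\theta^{(\gamma)}\le\E\widetilde\theta^{(1)}<\infty$ comes for free, with no Wald-type estimate needed.
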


\begin{proof}
We follow arguments similar to those in the proof of the upper bound in 
Theorem \ref{thm:asy.sstar}. 

Without loss of generality, we may assume that $\overline F(x)\le\overline G(x)$ for all $x$.
From the condition $\overline F(x)=o(\overline G(x))$, we can identify an increasing concave 
function $g(x)\uparrow\infty$ such that $\overline F(x)\le\overline G(x)/g(x)$.
As a concave function, $g(x)$ is intermediate regularly varying at infinity.
Since $G$ is intermediate regular varying too, the distribution $H$
defined via its tail $\overline H(x):=\overline G(x)/g(x)$ is intermediate
regularly varying as well. We have $\overline F(x)\le\overline H(x)$
for all $x$ and $\overline H(x)=o(\overline G(x))$ as $x\to\infty$.

By the main result in \cite{D}, there exists a regularly varying distribution $J$ with index $-1$ 
and finite expectation such that $\overline F(x)\le\overline J(x)$ for all $x$.
The distribution $V$ defined via its tail as $\overline V(x):=\min(\overline J(x),\overline H(x))$
is intermediate regularly varying as $J$ and $H$ are, and its expectation is finite as the expectation of $J$ is.

Fix $\varepsilon>0$ and $\widehat x\in\R$ and consider an intermediate
regularly varying distribution $\widehat F$ defined as follows:
\begin{eqnarray*}
\widehat F(x)  &:=& 
\left\{
\begin{array}{ll}
(1-\varepsilon) F(x) &\mbox{if }x\le \widehat x;\\
1-\varepsilon\overline V(x) &\mbox{if }x>\widehat x.
\end{array}
\right.
\end{eqnarray*}
Let $\widehat\xi_n$, $n\ge 1$, be independent random variables with common distribution $\widehat F$.
Let $\widehat S_n=\widehat\xi_1+\ldots+\widehat\xi_n$.
By the construction, $\widehat\xi_1\ge_{st}\xi_1$ and hence $\widehat S_n\ge_{st}S_n$.
Therefore, $\widehat\theta\ge_{st}\theta$ where $\widehat\theta=\min\{n\ge 1:\widehat S_n\le 0\}$,
which implies that
\begin{eqnarray}\label{theta.thetahat}
\P\{\theta>n\} &\le& \P\{\widehat\theta>n\}\quad\mbox{for all }n\ge 1.
\end{eqnarray}
Choose $\varepsilon>0$ so small and $\widehat x\in\R$ so large that $\E\widehat\xi<0$.
Then Theorem \ref{thm:cycle.asy} applies to $\widehat S_n$, and we derive that
\begin{eqnarray*}
\P\{\widehat\theta>n\} &\sim& \E\widehat\theta\overline{\widehat F}(n|\E\widehat\xi|)\quad\mbox{as }n\to\infty.
\end{eqnarray*}
Hence,
\begin{eqnarray*}
\P\{\widehat\theta>n\} &=& O(\overline V(n|\E\widehat\xi|))\\
 &=& O(\overline V(n))\quad\mbox{as }n\to\infty,
\end{eqnarray*}
since $V$ is intermediate regularly varying. Taking into account that 
$\overline V(x)\le\overline H(x)=o(\overline G(x))$ we conclude that
\begin{eqnarray*}
\P\{\widehat\theta>n\} &=& o(\overline G(n))\quad\mbox{as }n\to\infty,
\end{eqnarray*}
which together with \eqref{theta.thetahat} completes the proof,
\end{proof}

\end{document}